\documentclass[12pt]{article}
\usepackage[english]{babel}
\usepackage[all]{xy}
\usepackage{fancyhdr}
\usepackage{setspace, amsmath, amsthm, amssymb, amsfonts, amscd, epic, graphicx, ulem, dsfont}
\usepackage[bookmarks=false]{hyperref}
\newtheorem{theorem}{Theorem}

\newtheorem{proposition}[theorem]{Proposition}
\newtheorem{corollary}[theorem]{Corollary}
\newtheorem{example}[theorem]{Example}

\newtheorem{remark}[theorem]{Remark}

\makeatletter

\@addtoreset{equation}{section}
\makeatother

\title{On the generalization of biharmonic hypersurfaces and biharmonic curves}

\author{Moustafa Tadj \footnote{Laboratory of Mathematics, Statistics and Computer Science for scientific research (W1550900), University of Naama, Algeria. Email: tadj.moustafa@cuniv-naama.dz},  Ahmed Mohammed Cherif\footnote{University Mustapha Stambouli Mascara, Faculty of Exact Sciences, Mascara 29000, Algeria. Email: a.mohammedcherif@univ-mascara.dz}, and Fethi Latti\footnote{Laboratory of Mathematics, Statistics and Computer Science for scientific research (W1550900), University of Naama, Algeria. Email: etafati@hotmail.fr}}
\date{}

\begin{document}
	\maketitle
	
\begin{abstract}
In this work, we extend the concepts of $p$-biharmonic maps and $p$-biharmonic hypersurfaces to provide a broader characterization of $(p,q)$-harmonic hypersurfaces and $(p,q)$-harmonic curves in Riemannian manifolds, including Einstein spaces. Moreover, we present new explicit examples of proper $(p,q)$-harmonic hypersurfaces and $(p,q)$-harmonic curves in space forms.\\
		\begin{flushleft}
			\textit{\textbf{Keywords:}}  biharmonic hypersufaces, $p$-biharmonic hypersufaces, bi-$p$-harmonic hypersufaces. \\
            \textit{\textbf{Mathematics Subject Classification}: 53C43, 58E20, 53C25.}
		\end{flushleft}
\end{abstract}
	
\section{Introduction}

$p$-harmonic maps $\varphi:(M,g)\to (N,h)$ between Riemannian manifolds are defined as the critical points of the $p$-energy functional
\begin{equation}\label{eq1.1A}
E_p(\varphi;D)=\frac{1}{p}\int_{D} |d\varphi|^p\, v_g ,
\end{equation}
for any compact domain $D\subset M$ and for a constant $p>1$.
The associated Euler--Lagrange equation corresponding to \eqref{eq1.1A} is given by the vanishing of the $p$-tension field
\begin{equation}\label{eq1.1B}
\tau_p(\varphi)=\operatorname{div}^M\big(|d\varphi|^{p-2} d\varphi\big).
\end{equation}
A smooth map $\varphi$ is said to be $p$-harmonic if and only if $\tau_p(\varphi)=0$ (for more details on the concept of $p$-harmonic maps see \cite{BG,BI,ali}). Thus, $p$-harmonic maps can be viewed as a natural generalization of harmonic maps.\\
The notion of $p$-biharmonic maps extends the classical theory of harmonic and biharmonic maps (see \cite{Jiang, YX}).
A map $\varphi:(M,g)\to (N,h)$ between Riemannian manifolds is called $p$-biharmonic if it is a critical point of the $p$-biharmonic energy functional defined by
\begin{equation}\label{eq1.1}
E_{2,p}(\varphi;D)=\frac{1}{p}\int_{D} |\tau(\varphi)|^p\, v_g ,
\end{equation}
where $\tau(\varphi)$ denotes the tension field of $\varphi$, given by
$\tau(\varphi)=\operatorname{trace}_g\nabla d\varphi$.\\
In \cite{cherif2}, the author introduced the concept of bi-$p$-harmonic maps as follows.
For a smooth map $\varphi:(M,g)\to (N,h)$, the $p$-bienergy functional is defined by
\begin{equation}\label{eq1.2}
E_{p,2}(\varphi;D)=\frac{1}{2}\int_{D} |\tau_p(\varphi)|^2\, v_g .
\end{equation}
The map $\varphi$ is called bi-$p$-harmonic if it is a critical point of the $p$-bienergy functional \eqref{eq1.2} for every compact domain $D\subset M$.\\	
In this paper, we further generalize the notions of $p$-biharmonic and bi-$p$-harmonic maps by introducing the concept of $(p,q)$-harmonic maps. A smooth map $\varphi:(M,g)\to (N,h)$ between Riemannian manifolds is called $(p,q)$-harmonic if it is a critical point of the $(p,q)$-energy functional
\begin{equation}\label{eq1.3}
E_{p,q}(\varphi;D)=\frac{1}{q}\int_{D} |\tau_{p}(\varphi)|^{q}\, v_{g},
\end{equation}
for any compact domain $D\subset M$, where $p,q>1$ are constants.
Observe that when $p=q=2$, the notion of $(p,q)$-harmonic maps reduces to that of biharmonic maps.\\
Let $M$ be an $m$-dimensional submanifold of a Riemannian manifold $(N,h)$, and let
$\mathbf{i}:M\hookrightarrow (N,h)$ denote the canonical inclusion.
We denote by $g$ the Riemannian metric on $M$ induced by $h$.
Let $\nabla^N$ (resp. $\nabla^M$) be the Levi-Civita connection on $(N,h)$ (resp. $(M,g)$),
and let $\nabla^\perp$ denote the normal connection of $(M,g)$ in $(N,h)$.
We denote by $H$ the mean curvature vector field and by $A$ the shape operator of $(M,g)$ in $(N,h)$ (see \cite{baird,ON}).\\
The tension field of the inclusion $\mathbf{i}$ is given by
$\tau(\mathbf{i}) = mH.$
A submanifold $(M,g)$ is said to be $p$-harmonic (resp. $(p,q)$-harmonic) in $(N,h)$ if the tension field satisfies $\tau_p(\mathbf{i})=0$ (resp. the $(p,q)$-tension field satisfies $\tau_{p,q}(\mathbf{i})=0$).
Non $p$-harmonic $(p,q)$-harmonic submanifolds are called proper $(p,q)$-harmonic.\\
In \cite{OU}, Ye-Lin Ou proved that a hypersurface $(M,g)$ in a Riemannian manifold $(N,h)$ with mean curvature vector field $H=f\eta$, where $\dim N=m+1$, is biharmonic if and only if
\begin{equation}\label{S}
\left\{
\begin{array}{lll}
-\Delta^M(f)+f|A|^2-f\,\mathrm{Ric}^N(\eta,\eta) &=& 0, \\\\
2A(\mathrm{grad}^M\,f)+mf\,\mathrm{grad}^M\,f-2f(\mathrm{Ricci}^N\,\eta)^\top &=& 0,
\end{array}
\right.
\end{equation}
where $\mathrm{Ric}^N$ (resp. $\mathrm{Ricci}^N$) denotes the Ricci curvature (resp. the Ricci tensor) of $(N,h)$.
Here, $f$ denotes the mean curvature function of $(M,g)$ and $A$ is the shape operator with respect to the unit normal vector field $\eta$.\\
In \cite{NU,OU}, the authors studied biharmonic hypersurfaces in Riemannian manifolds and investigated several properties of such hypersurfaces, particularly in the case where the ambient manifold has non-positive Ricci curvature.
 In \cite{MM2}, the authors presented a generalization of these results. \\
 In this work, we derive the first variation formula of the $(p,q)$-energy functional and construct a model for $(p,q)$-harmonic hypersurfaces. Furthermore, we present new examples of proper $(p,q)$-harmonic hypersurfaces. These notions provide a unified framework that generalizes several energy functionals arising in geometric analysis and are closely related to the study of nonlinear differential equations. Finally, we investigate the case of curves in three-dimensional space forms.	
\section{The $(p,q)$-Harmonic maps}
In \cite{LC}, the authors provide an explicit formula for the first variation of the $(p,q)$-energy functional and define the corresponding $(p,q)$-tension field $\tau_{p,q}(\varphi)$ as follows.
\begin{theorem}\label{th01}
Let $\varphi$ be a smooth map from a Riemannian manifold $(M,g)$ to a Riemannian manifold $(N,h)$, and let $\{\varphi_t\}_{t\in(-\varepsilon,\varepsilon)}$ be a smooth variation of $\varphi$ with compact support in a domain $D\subset M$. Then
\begin{equation}\label{equa01}
\frac{d}{dt}E_{p,q}(\varphi_t;D)\Big|_{t=0}
=-\int_D h\big(v,\tau_{p,q}(\varphi)\big)\,v_g,
\end{equation}
where $\tau_{p,q}(\varphi)$ is the $(p,q)$-tension field of $\varphi$ given by
\begin{eqnarray*}
\tau_{p,q}(\varphi)
&=&-|d\varphi|^{p-2}|\tau_{p}(\varphi)|^{q-2}\operatorname{trace}_g
R^{N}\big(\tau_{p}(\varphi),d\varphi\big)d\varphi\\
&&- \operatorname{trace}_g\nabla^{\varphi}|d\varphi|^{p-2}
\nabla^{\varphi}|\tau_{p}(\varphi)|^{q-2}\tau_{p}(\varphi)\\
&&-(p-2)\operatorname{trace}_g\nabla^{\varphi}|d\varphi|^{p-4}
\left\langle\nabla^{\varphi}|\tau_{p}(\varphi)|^{q-2}\tau_{p}(\varphi),d\varphi\right\rangle d\varphi,
\end{eqnarray*}
and $v=\frac{d\varphi_t}{dt}\big|_{t=0}$ denotes the variation vector field of $\{\varphi_t\}$.
The map $\varphi$ is $(p,q)$-harmonic if its associated $(p,q)$-tension field vanishes.
\end{theorem}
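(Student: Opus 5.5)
We propose to differentiate $E_{p,q}(\varphi_t;D)=\frac1q\int_D|\tau_p(\varphi_t)|^q v_g$ directly. We work on the pull-back bundle $\Phi^{-1}TN$ over $(-\varepsilon,\varepsilon)\times M$, where $\Phi(t,x)=\varphi_t(x)$, and use the Levi-Civita connection $\nabla^\Phi$. Set $\sigma=\tau_p(\varphi_t)$. The first step is
\[
\frac{d}{dt}E_{p,q}(\varphi_t;D)=\int_D|\sigma|^{q-2}\,h\big(\nabla^\Phi_{\partial_t}\sigma,\sigma\big)\,v_g .
\]
Everything then reduces to computing $\nabla^\Phi_{\partial_t}\tau_p(\varphi_t)$ at $t=0$ and moving derivatives off $v$ by integration by parts. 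Throughout we fix a local orthonormal frame $\{e_i\}$ that is normal at the point considered.

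The second step is the variation of the $p$-tension field. Write $\tau_p(\varphi_t)=\sum_i\big(\nabla^\Phi_{e_i}(|d\varphi_t|^{p-2}d\varphi_t(e_i))-|d\varphi_t|^{p-2}d\varphi_t(\nabla^M_{e_i}e_i)\big)$ and commute $\nabla_{\partial_t}$ past $\nabla_{e_i}$. The commutator produces the curvature term $R^N(d\Phi(\partial_t),d\varphi(e_i))\,|d\varphi|^{p-2}d\varphi(e_i)$. Since $[\partial_t,e_i]=0$, we also have $\nabla_{\partial_t}d\Phi(e_i)=\nabla_{e_i}v$. The derivative of the weight is
\[
\partial_t|d\varphi_t|^{p-2}=(p-2)|d\varphi|^{p-4}\sum_j h(\nabla_{e_j}v,d\varphi(e_j)).
\]
Altogether,
\[
\nabla_{\partial_t}\tau_p=\operatorname{trace}_g R^N(v,d\varphi)|d\varphi|^{p-2}d\varphi+\operatorname{div}\Big(|d\varphi|^{p-2}\nabla v+(p-2)|d\varphi|^{p-4}\langle\nabla v,d\varphi\rangle d\varphi\Big).
\]

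The third step pairs this expression with $W:=|\tau_p|^{q-2}\tau_p$ and integrates. The divergence terms are handled by the divergence theorem twice. Since $v$ has compact support in $D$, boundary terms vanish. Each move of $\nabla$ from $v$ to $W$ produces a sign, which gives
\[
\int_D h\big(v,\operatorname{trace}\nabla|d\varphi|^{p-2}\nabla W\big)+(p-2)\,h\big(v,\operatorname{trace}\nabla|d\varphi|^{p-4}\langle\nabla W,d\varphi\rangle d\varphi\big).
\]
In the second term we first use the symmetry of $\langle\nabla v,d\varphi\rangle\langle d\varphi,\nabla W\rangle$, then integrate by parts once more. For the curvature term we use the curvature symmetry $h(R^N(v,X)Y,Z)=h(R^N(Z,Y)X,v)$, which gives
\[
h\big(R^N(v,d\varphi(e_i))d\varphi(e_i),W\big)=h\big(R^N(W,d\varphi(e_i))d\varphi(e_i),v\big).
\]
Collecting the three pieces and comparing with the sign convention $-\int_D h(v,\tau_{p,q})$ yields the stated $\tau_{p,q}(\varphi)$.

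The main obstacle is the second step: keeping the correct signs and curvature convention in the commutation $\nabla_{\partial_t}\nabla_{e_i}-\nabla_{e_i}\nabla_{\partial_t}$, and differentiating the nonlinear weight $|d\varphi_t|^{p-2}$. I would check the final formula against the known case $p=q=2$, where it should reduce to Jiang's bitension field $-\operatorname{trace}R^N(\tau,d\varphi)d\varphi-\operatorname{trace}\nabla^2\tau$. I would also check it against the bi-$p$-harmonic case $q=2$ of \cite{cherif2}. These checks fix the overall sign, which depends on the paper's convention for $R^N$.
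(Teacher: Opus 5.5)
Your proposal is correct and follows essentially the same route as the paper. You differentiate $|\tau_p(\varphi_t)|^q$, commute $\nabla_{\partial_t}$ past $\nabla_{e_i}$ in a frame normal at $x$ to produce the curvature term, differentiate the weight $|d\varphi_t|^{p-2}$, and then move derivatives off $v$ by the divergence theorem together with the pair symmetry of $R^N$. The only difference is cosmetic: the paper packages your two rounds of integration by parts as the divergences of four explicit $1$-forms $\eta_1,\dots,\eta_4$.
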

\begin{proof}
Let $\phi$ be the smooth map defined by
\begin{eqnarray*}
\phi:(-\epsilon,\epsilon)\times M &\longrightarrow& N, \\
(t,x) &\longmapsto& \phi(t,x)=\varphi_{t}(x).
\end{eqnarray*}
Let $\{e_i\}_{i=1}^m$ be a local orthonormal frame on $(M,g)$ such that
$\nabla^{M}_{e_i}e_i=0$ at a point $x\in M$, for $i=1,\ldots,m$.
Note that $\phi(0,x)=\varphi(x)$. The variation vector field $v$ associated with the variation $\{\varphi_t\}_{t\in(-\epsilon,\epsilon)}$ is given by
\[
v=d\phi\!\left(\frac{\partial}{\partial t}\right)\Big|_{t=0}.
\]
Therefore, we have
\begin{equation}\label{equa005}
\frac{d}{dt}E_{p,q}(\varphi_t;D)
=\frac{1}{q}\int_D \frac{\partial}{\partial t}|\tau_p(\varphi_t)|^q\, v_g .
\end{equation}
Next, we compute the following term
\begin{eqnarray}\label{eq2.3}
\frac{\partial}{\partial t}|\tau_{p}(\varphi_{t})|^{q}
&=& \frac{\partial}{\partial t}\left[|\tau_{p}(\varphi_{t})|^{2}\right]^{\frac{q}{2}} \nonumber\\
&=& \frac{\partial}{\partial t}\left[h(\tau_{p}(\varphi_{t}),\tau_{p}(\varphi_{t}))\right]^{\frac{q}{2}} \nonumber\\
&=& \frac{q}{2}\,h(\tau_{p}(\varphi_{t}),\tau_{p}(\varphi_{t}))^{\frac{q}{2}-1}
\frac{\partial}{\partial t}h(\tau_{p}(\varphi_{t}),\tau_{p}(\varphi_{t})) \nonumber\\
&=& q\,|\tau_{p}(\varphi_{t})|^{\,q-2}
h\!\left(\nabla^{\phi}_{\frac{\partial}{\partial t}}\tau_{p}(\varphi_{t}),
\tau_{p}(\varphi_{t})\right).
\end{eqnarray}
Let $E_i=(0,e_i)$ for $i=1,\ldots,m$. Since $\nabla_{E_i}E_i=0$ at the point $x$, we obtain
\begin{eqnarray*}
\nabla^{\phi}_{\frac{\partial}{\partial t}} \tau_{p}(\varphi_{t})
&=& \nabla^{\phi}_{\frac{\partial}{\partial t}}
\Big[\nabla^{\phi}_{E_i}\big(|d\varphi_{t}|^{p-2}d\phi(E_i)\big)
-|d\varphi_{t}|^{p-2}d\phi(\nabla_{E_i}E_i)\Big] \\
&=& \nabla^{\phi}_{\frac{\partial}{\partial t}}
\nabla^{\phi}_{E_i}\big(|d\varphi_{t}|^{p-2}d\phi(E_i)\big).
\end{eqnarray*}
Using the definition of the curvature tensor of $(N,h)$, we obtain
\begin{eqnarray}\label{eq2.4}
\nabla_{\frac{\partial}{\partial t}}^{\phi} \tau_{p}(\varphi_{t})
&=& R^{N}\!\left(d\phi\!\left(\frac{\partial}{\partial t}\right),d\phi(E_{i})\right)
|d\varphi_{t}|^{p-2}d\phi(E_{i}) \nonumber\\
&&+\nabla_{E_{i}}^{\phi}\nabla_{\frac{\partial}{\partial t}}^{\phi}
\big(|d\varphi_{t}|^{p-2}d\phi(E_{i})\big)  \nonumber\\
&&+\nabla^{\phi}_{\left[\frac{\partial}{\partial t},E_{i}\right]}
\big(|d\varphi_{t}|^{p-2}d\phi(E_{i})\big).
\end{eqnarray}
Since $\left[\frac{\partial}{\partial t},E_i\right]=0$, it follows that
\begin{eqnarray}\label{eq2.4b}
\nabla_{\frac{\partial}{\partial t}}^{\phi} \tau_{p}(\varphi_{t})
&=& |d\varphi_{t}|^{p-2}
R^{N}\!\left(d\phi\!\left(\frac{\partial}{\partial t}\right),d\phi(E_{i})\right)d\phi(E_{i})
\nonumber\\
&&+\nabla_{E_{i}}^{\phi}\nabla_{\frac{\partial}{\partial t}}^{\phi}
\big(|d\varphi_{t}|^{p-2}d\phi(E_{i})\big).
\end{eqnarray}
Substituting \eqref{eq2.4b} into \eqref{eq2.3}, we obtain
\begin{eqnarray}\label{equa03}
\frac{1}{q}\frac{\partial}{\partial t}|\tau_{p}(\varphi_{t})|^{q}
&=& |d\varphi_{t}|^{p-2} |\tau_{p}(\varphi_{t})|^{q-2}
h\!\left(R^{N}\!\left(d\phi\!\left(\frac{\partial}{\partial t}\right),
d\phi(E_{i})\right)d\phi(E_{i}),\tau_{p}(\varphi_{t})\right) \nonumber\\
&&+ E_{i}\!\Big[|\tau_{p}(\varphi_{t})|^{q-2}
h\!\left(\nabla_{\frac{\partial}{\partial t}}^{\phi}
\big(|d\varphi_{t}|^{p-2}d\phi(E_{i})\big),\tau_{p}(\varphi_{t})\right)\Big] \nonumber\\
&&-h\!\left(\nabla_{\frac{\partial}{\partial t}}^{\phi}
\big(|d\varphi_{t}|^{p-2}d\phi(E_{i})\big),
\nabla_{E_{i}}^{\phi}\big(|\tau_{p}(\varphi_{t})|^{q-2}\tau_{p}(\varphi_{t})\big)\right).
\end{eqnarray}
Using the property $\nabla^{\phi}_{X}d\phi(Y)=\nabla^{\phi}_{Y}d\phi(X)+d\phi([X,Y])$, we obtain
\begin{eqnarray}\label{eq2.6}
\nabla_{\frac{\partial}{\partial t}}^{\phi}|d\varphi_t|^{p-2}d\phi(E_{i})\Big|_{t=0}
&=& \nabla_{\frac{\partial}{\partial t}}^{\phi}d\phi(|d\varphi_t|^{p-2}E_{i})\Big|_{t=0}\\
&=&\nonumber |d\varphi|^{p-2}\nabla^\varphi_{e_{i}}v+\frac{\partial}{\partial t}|d\varphi_t|^{p-2}\Big|_{t=0}d\varphi(e_i)\\
&=&\nonumber |d\varphi|^{p-2}\nabla^\varphi_{e_{i}}v
+(p-2)|d\varphi|^{p-4}h(\nabla^\varphi_{e_{j}}v,d\varphi(e_{j}))d\varphi(e_{i}).
\end{eqnarray}
Substituting (\ref{eq2.6}) into (\ref{equa03}), we obtain
\begin{eqnarray}\label{eq2.7}
   \frac{1}{q}\frac{\partial}{\partial t}|\tau_{p}(\varphi_{t})|^{q}\Big|_{t=0}
   &=&\nonumber |d\varphi|^{p-2} |\tau_{p}(\varphi)|^{q-2}h\big(R^{N}\big(v,d\varphi(e_{i})\big)d\varphi(e_{i}),\tau_{p}(\varphi) \big) \\
   &&\nonumber + e_{i}\Big[|d\varphi|^{p-2}|\tau_{p}(\varphi)|^{q-2}h(\nabla^\varphi_{e_{i}}v,\tau_{p}(\varphi))\Big]\\
   &&\nonumber + (p-2) e_{i}\Big[|d\varphi|^{p-4}|\tau_{p}(\varphi)|^{q-2}h(\nabla^\varphi_{e_{j}}v,d\varphi(e_{j}))h(d\varphi(e_{i}),\tau_{p}(\varphi))\Big]\nonumber\\
   &&\nonumber-|d\varphi|^{p-2}h(\nabla^\varphi_{e_{i}}v,\nabla_{e_{i}}^{\varphi}|\tau_{p}(\varphi)|^{q-2}\tau_{p}(\varphi_{t}))\\
   &&\nonumber-(p-2)|d\varphi|^{p-4}h(\nabla^\varphi_{e_{j}}v,d\varphi(e_{j}))
   h(d\varphi(e_{i}),\nabla_{e_{i}}^{\varphi}|\tau_{p}(\varphi)|^{q-2}\tau_{p}(\varphi)).
\end{eqnarray}
Let $\eta_{1},\eta_{2},\eta_{3},\eta_{4}\in \Gamma(T^{*}M)$ defined by
\begin{eqnarray*}
\eta_{1}(X)&=&|d\varphi|^{p-2}|\tau_p(\varphi)|^{q-2}h\big(\nabla_{X}^{\varphi}v,\tau_{p}(\varphi) \big),\\
\eta_{2}(X)&=&|d\varphi|^{p-4}|\tau_p(\varphi)|^{q-2}\left\langle\nabla^{\varphi}v,d\varphi\right\rangle h\big(d\varphi(X),\tau_{p}(\varphi)\big),\\
\eta_{3}(X)&=&|d\varphi|^{p-2}h\big(v,\nabla_{X}^{\varphi}|\tau_{p}(\varphi)|^{q-2}\tau_{p}(\varphi) \big),\\
\eta_{4}(X)&=&|d\varphi|^{p-4}
\left\langle\nabla^{\varphi}|\tau_{p}(\varphi)|^{q-2}\tau_{p}(\varphi),d\varphi\right\rangle h\big(v,d\varphi(X)\big).
\end{eqnarray*}
Therefore, equation \eqref{eq2.7} becomes
\begin{eqnarray*}
\frac{1}{q}\frac{\partial}{\partial t}|\tau_{p}(\varphi_{t})|^{q}\Big|_{t=0}
&=& |d\varphi|^{p-2}|\tau_{p}(\varphi)|^{q-2}
h\!\left(R^{N}\big(\tau_{p}(\varphi),d\varphi(e_{i})\big)d\varphi(e_{i}),v\right)\\
&&+ \operatorname{div}\eta_{1}+(p-2)\operatorname{div}\eta_{2}
-\operatorname{div}\eta_{3}-(p-2)\operatorname{div}\eta_{4} \\
&&+ h\!\left(v,\nabla_{e_{i}}^{\varphi}|d\varphi|^{p-2}
\nabla_{e_{i}}^{\varphi}|\tau_{p}(\varphi)|^{q-2}\tau_{p}(\varphi)\right) \\
&&+(p-2)h\!\left(v,\nabla_{e_{j}}^{\varphi}|d\varphi|^{p-4}
\left\langle\nabla^{\varphi}|\tau_{p}(\varphi)|^{q-2}\tau_{p}(\varphi),
d\varphi\right\rangle d\varphi(e_{j})\right).
\end{eqnarray*}
Applying the divergence theorem on the compact domain $D$, we obtain the formula \eqref{equa01}.
This completes the proof of Theorem \ref{th01}.
\end{proof}
\section{The $(p,q)$-harmonic hypersurfaces}
We present a characterization of $(p,q)$-harmonic hypersurfaces in terms of their mean curvature and geometric data.
\begin{theorem}\label{th2}
Let $(M^m, g)$ be a hypersurface in $(N^{m+1},h)$ with mean curvature vector $H = f \eta$. Then, $(M^m,g)$ is proper $(p,q)$-harmonic hypersurface if and only if $f\neq0$ and the following system of equations holds
\[
\left\{
\begin{array}{l}
\displaystyle
-(q-1)f\Delta^M(f) -(q-1)(q-2)|\operatorname{grad}^M f|^2
+ f^{2}|A|^2- f^{2} \, \mathrm{Ric}^N(\eta, \eta)\\[1.5ex]
\displaystyle
 + m(p - 2)f^{4} = 0,\\[2ex]
\displaystyle
2(q-1)A(\operatorname{grad}^M f) - 2 f (\mathrm{Ricci}^N \eta)^{\top} + f \big[ m
+(p-2)q \big] \operatorname{grad}^M f = 0,
\end{array}
\right.
\]
where $\mathrm{Ric}^N$ (resp. $\mathrm{Ricci}^N$) denotes the Ricci curvature (resp. Ricci tensor) of $(N^{m+1}, h)$, and $A$ is the shape operator of $M$ with respect to the unit normal vector field $\eta$.
\end{theorem}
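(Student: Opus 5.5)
The plan is to specialize the $(p,q)$-tension field of Theorem \ref{th01} to the inclusion $\mathbf{i}:M\hookrightarrow N$, then split the result into normal and tangential components, as Ou does in the biharmonic case \eqref{S}.

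\textbf{Step 1: the basic quantities.} Since $g$ is induced by $h$, we have $|d\mathbf{i}|^2=m$. Hence $|d\mathbf{i}|^{p-2}=m^{(p-2)/2}$ is a constant and can be pulled through every covariant derivative. This gives
$$\tau_p(\mathbf{i})=c\,f\eta,\qquad c=m^{p/2},$$
and therefore
$$|\tau_p(\mathbf{i})|^{q-2}\tau_p(\mathbf{i})=c^{q-1}F\eta,\qquad F:=|f|^{q-2}f.$$
Properness means $\tau_p(\mathbf{i})\neq0$, i.e.\ $f\neq 0$. On the open set where $f\neq0$, $F$ is smooth, with
$$\operatorname{grad}F=(q-1)|f|^{q-2}\operatorname{grad}f.$$

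\textbf{Step 2: the three terms of $\tau_{p,q}(\mathbf{i})$.}

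\emph{Curvature term.} It becomes $-m^{(p-2)/2}c^{q-1}F\operatorname{trace}_gR^N(\eta,d\mathbf{i})d\mathbf{i}$. Its normal part is $-\mathrm{Ric}^N(\eta,\eta)\eta$ and its tangential part is $-(\mathrm{Ricci}^N\eta)^\top$, both up to the common factor and with the sign fixed by the convention used in \eqref{S}.

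\emph{Rough Laplacian term.} We need $\operatorname{trace}_g\nabla^{\mathbf i}\nabla^{\mathbf i}(F\eta)$. I use $\nabla^N_{e_i}\eta=-A e_i$ and the Codazzi equation to write $\operatorname{trace}(\nabla A)=\operatorname{grad}(mf)+(\mathrm{Ricci}^N\eta)^\top$-type terms. This is exactly the computation behind \eqref{S}, now with $f$ replaced by $F$. It yields the normal part $-\Delta F-F|A|^2$ and the tangential part $-2A(\operatorname{grad}F)-mF\operatorname{grad}f$, plus the Ricci contribution.

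\emph{$(p-2)$-term.} First,
$$\big\langle\nabla^{\mathbf i}(F\eta),d\mathbf{i}\big\rangle=F\,h(\nabla_{e_i}\eta,e_i)=-mfF.$$
The term then reduces to $-(p-2)m^{(p-4)/2}(-m)\operatorname{trace}\nabla(fF\,d\mathbf{i})$. Using $\operatorname{trace}\nabla(\phi\, d\mathbf{i})=\operatorname{grad}\phi+m\phi f\eta$, this contributes
$$(p-2)m^{(p-2)/2}\big[\operatorname{grad}(fF)+mf^2F\eta\big],$$
where
$$\operatorname{grad}(fF)=q|f|^{q-2}f\operatorname{grad}f,\qquad f^2F=|f|^{q-2}f^3.$$

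\textbf{Step 3: assembling the equations.} The common factor $m^{(p-2)/2}c^{q-1}$ drops out.

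For the normal component, expand
$$\Delta F=(q-1)|f|^{q-2}\Delta f+(q-1)(q-2)|f|^{q-4}f|\operatorname{grad}f|^2.$$
Divide by $|f|^{q-2}$ and multiply by $f$. This turns $-\Delta F$ into $-(q-1)f\Delta f-(q-1)(q-2)|\operatorname{grad}f|^2$ (with the paper's sign convention for $\Delta^M$), the curvature terms into $f^2|A|^2-f^2\mathrm{Ric}^N(\eta,\eta)$, and the $(p-2)$ term into $m(p-2)f^4$. That is the first equation.

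For the tangential component, divide by $|f|^{q-2}$. The terms are
$$2(q-1)A(\operatorname{grad}f),\qquad mf\operatorname{grad}f,\qquad (p-2)qf\operatorname{grad}f,\qquad -2f(\mathrm{Ricci}^N\eta)^\top,$$
which give the second equation.

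\textbf{Main obstacle.} The main difficulty is bookkeeping of signs and constants. In particular, the powers of $m$ coming from $|d\mathbf{i}|^{p-4}$ and from $\langle\cdot,d\mathbf{i}\rangle$ must combine to the same factor $m^{(p-2)/2}$ as the other terms, so that $m(p-2)f^4$ appears with no stray constant. I will check this against the case $p=q=2$, which must reduce to \eqref{S} multiplied by $f$.
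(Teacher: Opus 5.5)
Your proposal is correct and follows essentially the same route as the paper: specialize $\tau_{p,q}$ to the inclusion using $|d\mathbf{i}|^2=m$ and $\tau_p(\mathbf{i})=m^{p/2}f\eta$, then compute three terms. These are the curvature term, the rough-Laplacian term (via $\nabla^N_X\eta=-AX$ and the Codazzi-type identity $\sum_i(\nabla_{e_i}A)e_i=m\operatorname{grad}^M f-(\mathrm{Ricci}^N\eta)^\top$, which the paper derives directly from $R^N$) and the $(p-2)$-term, after which you split into normal and tangential parts once the common factor $m^{pq/2-1}$ is removed. Your use of $F=|f|^{q-2}f$ in place of the paper's $f^{q-1}$ is the more careful choice when $f<0$; in the write-up, keep one Laplacian convention throughout (with the paper's $\Delta^M=\operatorname{trace}\nabla d$, the normal part of $\operatorname{trace}\nabla^2(F\eta)$ is $\Delta^M F-F|A|^2$, not $-\Delta^M F-F|A|^2$), fix the Codazzi sign as $-(\mathrm{Ricci}^N\eta)^\top$ rather than leaving it as a ``$+(\mathrm{Ricci}^N\eta)^\top$-type'' term, and restore the factor $c^{q-1}$ missing from your displayed $(p-2)$-contribution.
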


\begin{proof}
Choose a local normal orthonormal frame $\{e_i\}_{i=1}^m$ on $(M^m, g)$ at $x$ such that $\{e_1, \dots, e_m, \eta\}$ forms an orthonormal frame of the ambient manifold $(N^{m+1}, h)$ along $M^m$. Let $\mathbf{i}: (M^m,g) \hookrightarrow (N^{m+1}, h)$
denote the canonical inclusion. Observe that $d\mathbf{i}(X) = X$, $\nabla_X^{\mathbf{i}} Y = \nabla_X^N Y$, $|d\mathbf{i}|^2=m$,
and the $p$-tension field of $\mathbf{i}$ satisfies $\tau_p(\mathbf{i}) = m^{\frac{p}{2}} f \, \eta$,
thus $|\tau_p(\mathbf{i})|^2=m^{p}f^2$.
We now proceed to compute the $p$-bitension field of $\mathbf{i}$. We have
\begin{equation}\label{th2-eq1}
\begin{aligned}
		\tau_{p, q}(\mathbf{i})= & -|d \mathbf{i}|^{p-2} |\tau_p(\mathbf{i})|^{q-2}\operatorname{trace}_g R^N\left(\tau_p(\mathbf{i}), d \mathbf{i}\right) d \mathbf{i}\\
		& -\operatorname{trace}_g \nabla^{\mathrm{i}}|d \mathbf{i}|^{p-2} \nabla^{\mathrm{i}} |\tau_p(\mathbf{i})|^{q-2}\tau_p(\mathbf{i})\\
		& -(p-2)\operatorname{trace}_g\nabla^{\mathrm{i}}|d \mathbf{i}|^{p-4}\left\langle\nabla^{\mathrm{i}} |\tau_p(\mathbf{i})|^{q-2}\tau_p(\mathbf{i}), d \mathbf{i}\right\rangle d \mathbf{i}.
	\end{aligned}
\end{equation}
The first term of (\ref{th2-eq1}) can be expressed as\\

$\displaystyle-|d\mathbf{i}|^{p-2}|\tau_p(\mathbf{i})|^{q-2}\operatorname{trace}_g R^N\left(\tau_p(\mathbf{i}), d \mathbf{i}\right) d \mathbf{i}$
\begin{equation}\label{th2-eq2}
\begin{aligned}
		\qquad\qquad\qquad&=-|d \mathbf{i}|^{p-2}|\tau_p(\mathbf{i})|^{q-2} \sum_{i=1}^m R^N\left(\tau_p(\mathbf{i}), d \mathbf{i}\left(e_i\right)\right) d \mathbf{i}\left(e_i\right)\\
		& =-m^{\frac{pq}{2}-1}f^{q-1} \sum_{i=1}^m R^N\left(\eta, e_i\right) e_i \\
		& = -m^{\frac{pq}{2}-1}f^{q-1}\operatorname{Ricci}^N \eta \\
		& =-m^{\frac{pq}{2}-1}f^{q-1}\left[\left(\operatorname{Ricci}^N \eta\right)^{\perp}+\left(\operatorname{Ricci}^N \eta\right)^{\top}\right].
	\end{aligned}
\end{equation}
We compute the second term of (\ref{th2-eq1}) at $x$. We have\\

$\displaystyle-\operatorname{trace}_g \nabla^{\mathbf{i}}|d \mathbf{i}|^{p-2} \nabla^{\mathbf{i}} |\tau_p(\mathbf{i})|^{q-2}\tau_p(\mathbf{i})$
\begin{align}\label{th2-eq3}
	\qquad\qquad\qquad&= -m^{\frac{pq}{2}-1} \sum_{i=1}^m \nabla_{e_i}^N \nabla_{e_i}^N f^{q-1} \eta \nonumber\\
	&= -m^{\frac{pq}{2}-1} \sum_{i=1}^m \nabla_{e_i}^N\left[(q-1)e_i(f)f^{q-2} \eta+f^{q-1} \nabla_{e_i}^N \eta\right] \nonumber\\
	&= -m^{\frac{pq}{2}-1}\Bigl[(q-1)\Delta^M(f)f^{q-2} \eta+(q-1)(q-2)f^{q-3}|\operatorname{grad}^{M}f|^2\eta \nonumber\\
	&\quad +2(q-1)f^{q-2} \nabla_{\operatorname{grad}^Mf}^N \eta+f^{q-1} \sum_{i=1}^m \nabla_{e_i}^N \nabla_{e_i}^N \eta\Bigr].
\end{align}
We compute the term $\displaystyle\sum_{i=1}^m \nabla_{e_i}^N \nabla_{e_i}^N \eta$. At $x$, we obtain
 \begin{eqnarray} \label{th2-eq4}
 \sum_{i=1}^m\nabla_{e_i}^N \nabla_{e_i}^N \eta
                                              &=& \nonumber \sum_{i=1}^m\nabla_{e_i}^N\left[(\nabla_{e_i}^N \eta)^\perp +(\nabla_{e_i}^N \eta)^\top \right] \\
                                              &=&\nonumber  -\sum_{i=1}^m\nabla_{e_i}^NA(e_i) \\
                                              &=& - \sum_{i=1}^m\nabla_{e_i}^M A(e_i)-\sum_{i=1}^mB(e_i , A(e_i)).
 \end{eqnarray}
 By using the property $g( A(X),Y) = h( B(X,Y) , \eta)  $, we get at $x$
 \begin{eqnarray}\label{th2-eq5}
 \nonumber \sum_{i=1}^m\nabla_{e_i}^M A(e_i)
                                 &=&\sum_{i,j=1}^m g( \nabla_{e_i}^M A(e_i), e_j)  e_j \\
 \nonumber                       &=&\sum_{i,j=1}^m\left[ e_i g( A(e_i) , e_j )  e_j - g( A(e_i) , \nabla^M_{e_i} e_j ) e_j \right]\\
 \nonumber                       &=&\sum_{i,j=1}^m e_i h( B(e_i,e_j) , \eta ) e_j \\
 \nonumber                       &=&\sum_{i,j=1}^m e_i h( \nabla_{e_j}^Ne_i , \eta ) e_j\\
                       &=&\sum_{i,j=1}^m h( \nabla_{e_i}^N\nabla_{e_j}^Ne_i , \eta ) e_j.
 \end{eqnarray}
By using the definition of curvature tensor $R^N$ of $(N^{m+1},h)$, we obtain
\begin{eqnarray} \label{th2-eq6}
\nonumber \sum_{i=1}^m\nabla_{e_i}^M A(e_i)
&=&\sum_{i,j=1}^m \left[\langle R^N (e_i,e_j)e_i , \eta\rangle e_j + h( \nabla_{e_j}^N\nabla_{e_i}^Ne_i , \eta) e_j\right] \\
\nonumber
&=&\sum_{i,j=1}^m \left[ -h( R^N (\eta , e_i) e_i ,e_j ) e_j +  h( \nabla_{e_j}^N\nabla_{e_i}^Ne_i , \eta ) e_j \right]\\
\nonumber
&=&  - \sum_{j=1}^m h( \operatorname{Ricci}^N \eta , e_j ) e_j + \sum_{i,j=1}^m e_j h( \nabla_{e_i}^Ne_i , \eta) e_j\\
& &-\sum_{i,j=1}^m h( \nabla_{e_i}^N{e_i}, \nabla_{e_i}^N \eta ) e_j \nonumber\\
&=& -( \operatorname{Ricci}^N \eta )^\top + m \operatorname{grad}^M f.
\end{eqnarray}
On the other hand, we have the following
 \begin{eqnarray} \label{th2-eq7}
 \nonumber \sum_{i=1}^mB(e_i,A(e_i )) &=& \sum_{i=1}^m h( B(e_i , A(e_i)) , \eta)   \eta \\
 \nonumber                &=& \sum_{i=1}^mg(  A(e_i) , A(e_i ))  \eta \\
                          &=& |A|^2 \eta.
 \end{eqnarray}
Substituting (\ref{th2-eq6}) and (\ref{th2-eq7}) in (\ref{th2-eq4}), we obtain at $x$
\begin{equation}\label{th2-eq8}
\sum_{i=1}^m \nabla_{e_i}^N \nabla_{e_i}^N \eta  =\left(\operatorname{Ricci}^{N} \eta\right)^{\top}-m\operatorname{grad}^{M}f-|A|^2\eta.
\end{equation}
Substituting (\ref{th2-eq8}) in (\ref{th2-eq3}), we conclude that\\

$\displaystyle-\operatorname{trace}_g \nabla^{\mathbf{i}}|d \mathbf{i}|^{p-2} \nabla^{\mathbf{i}} |\tau_p(\mathbf{i})|^{q-2}\tau_p(\mathbf{i})$
\begin{align}\label{th2-eq9}
	\qquad\qquad\qquad&= -m^{\frac{pq}{2}-1}\Bigl[(q-1)\Delta^M(f)f^{q-2} \eta+(q-1)(q-2)f^{q-3}|\operatorname{grad}^{M}f|^2\eta \nonumber\\
	&\quad -2(q-1)f^{q-2} A(\operatorname{grad}^Mf)+f^{q-1} \left(\operatorname{Ricci}^{N} \eta\right)^{\top}\nonumber\\
    &\quad -m f^{q-1}\operatorname{grad}^{M}f-f^{q-1}|A|^2\eta\Bigr].
\end{align}
The third term of (\ref{th2-eq1}) is provided by\\

$\displaystyle-(p-2) \operatorname{trace}_g \nabla^{\mathbf{i}}|d \mathbf{i}|^{p-4}\left\langle\nabla^{\mathbf{i}} |\tau_p(\mathbf{i})|^{q-2}\tau_p(\mathbf{i}), d \mathbf{i}\right\rangle d \mathbf{i}$
\begin{equation}\label{th2-eq10}
\begin{aligned}
&=-(p-2) m^{\frac{pq-4}{2}} \sum_{i, j=1}^m \nabla_{e_j}^N h( \nabla_{\mathrm{e}_i}^N f^{q-1} \eta, e_i) e_j.
\end{aligned}
\end{equation}
Furthermore, we have
\begin{equation}\label{th2-eq11}
\begin{aligned}
& \sum_{i=1}^m\left\langle\nabla_{e_i}^N f^{q-1} \eta, e_i\right\rangle=\sum_{i=1}	
^m h(f^{q-1}\nabla_{e_i}^N \eta,e_i)
=-m f^q.
\end{aligned}
\end{equation}
Substituting (\ref{th2-eq11}) in (\ref{th2-eq10}), we find that\\

$\displaystyle-(p-2) \operatorname{trace}_g \nabla^{\mathbf{i}}|d \mathbf{i}|^{p-4}\left\langle\nabla^{\mathbf{i}} |\tau_p(\mathbf{i})|^{q-2}\tau_p(\mathbf{i}), d \mathbf{i}\right\rangle d \mathbf{i}$
\begin{equation}\label{th2-eq12}
\begin{aligned}
\qquad\qquad\qquad&=m^{\frac{pq}{2}-1}(p-2)\left[qf^{q-1}\operatorname{grad}^M f+m f^{q+1} \eta\right].
\end{aligned}
\end{equation}
The Theorem \ref{th2} follows by equations (\ref{th2-eq1}), (\ref{th2-eq2}), (\ref{th2-eq9}), and (\ref{th2-eq12}).
\end{proof}

\begin{remark}
Theorem \ref{th2} can be regarded as a natural generalization of Ou's biharmonic hypersurface characterization in \cite{OU}. In particular, setting $p=2$ and $q=2$ recovers the system \eqref{S}. The introduction of the two real parameters $p$ and $q$ can be interpreted as a two-parameter perturbation of Ou's biharmonic equation.
\end{remark}

As a direct application of Theorem \ref{th2}, we obtain a more explicit characterization in the case where the ambient manifold is a space form. Indeed, using the curvature identity $R^N(X,Y)Z = c\big[h(Y,Z)X - h(X,Z)Y\big]$ and the relations $\mathrm{Ric}^N(\eta,\eta)=mc$,
$(\operatorname{Ricci}^{N} \eta)^{\top}=0$, the general $(p,q)$-harmonic equations reduce to the following system.

\begin{proposition}\label{pro1}
Let $(M^m, g)$ be a hypersurface in a space form $(N^{m+1}(c),h)$ with mean curvature vector $H = f \eta$. Then, $(M^m,g)$ is proper $(p,q)$-harmonic hypersurface if and only if $f\neq0$ and the following system of equations holds
\[
\left\{
\begin{array}{l}
\displaystyle
-(q-1)\left[f\Delta^M(f)+(q-2)|\operatorname{grad}^M f|^2\right]
+ \left(|A|^2-mc\right) f^{2}+ m(p - 2)f^{4} = 0, \\[2ex]
\displaystyle
2(q-1)A(\operatorname{grad}^M f)  + f \big[ m
+(p-2)q \big] \operatorname{grad}^M f = 0.
\end{array}
\right.
\]
\end{proposition}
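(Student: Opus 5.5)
The plan is to specialize Theorem \ref{th2} directly, since Proposition \ref{pro1} only concerns a particular ambient manifold. We need the two curvature quantities that appear in Theorem \ref{th2}, namely $\mathrm{Ric}^N(\eta,\eta)$ and $(\mathrm{Ricci}^N\eta)^\top$, computed for the space form $N^{m+1}(c)$, whose curvature tensor is $R^N(X,Y)Z=c[h(Y,Z)X-h(X,Z)Y]$.

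First compute the Ricci operator. We use the convention of the proof of Theorem \ref{th2}, in which $\mathrm{Ricci}^N\eta=\sum_{i=1}^m R^N(\eta,e_i)e_i$ over a tangent orthonormal frame $\{e_i\}$ of $M$. The normal term is omitted here because $R^N(\eta,\eta)\eta=0$. We get
\[
R^N(\eta,e_i)e_i=c\big[h(e_i,e_i)\eta-h(\eta,e_i)e_i\big]=c\,\eta ,
\]
since $\eta\perp e_i$. Hence $\mathrm{Ricci}^N\eta=mc\,\eta$. This gives $(\mathrm{Ricci}^N\eta)^\top=0$ and $\mathrm{Ric}^N(\eta,\eta)=h(\mathrm{Ricci}^N\eta,\eta)=mc$.

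Next substitute these values into the system of Theorem \ref{th2}. In the first equation, the term $-f^2\,\mathrm{Ric}^N(\eta,\eta)$ becomes $-mcf^2$. Grouping it with $f^2|A|^2$ gives $(|A|^2-mc)f^2$. Factoring $-(q-1)$ out of the first two terms gives $-(q-1)[f\Delta^M f+(q-2)|\operatorname{grad}^M f|^2]$, which is the first stated equation. In the second equation, the term $-2f(\mathrm{Ricci}^N\eta)^\top$ vanishes, and what remains is exactly the second stated equation. The condition $f\neq 0$ (properness) carries over unchanged, so the equivalence in Proposition \ref{pro1} follows from the equivalence in Theorem \ref{th2}.

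The main point needing care is the sign and normalization convention for $\mathrm{Ricci}^N$, which must match the one used in Theorem \ref{th2}. There it arises as $\sum_i R^N(\eta,e_i)e_i$. With the stated form of $R^N$ this sum is $+mc\,\eta$, which agrees with the standard value $\mathrm{Ric}=mc$ for an $(m+1)$-dimensional space form. So the signs are consistent, and no further obstacle is expected.
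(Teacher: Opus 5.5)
Your proposal is correct and follows the paper's route exactly. The paper also obtains Proposition \ref{pro1} by computing $\mathrm{Ric}^N(\eta,\eta)=mc$ and $(\mathrm{Ricci}^N\eta)^\top=0$ from $R^N(X,Y)Z=c[h(Y,Z)X-h(X,Z)Y]$ and substituting them into the system of Theorem \ref{th2}. Your explicit check that $\sum_i R^N(\eta,e_i)e_i=mc\,\eta$ matches the Ricci convention used in the proof of Theorem \ref{th2} is a useful detail that the paper leaves implicit.
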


As a direct consequence of Theorem \ref{th2}, we obtain a simplified characterization in the case where the ambient manifold is Einstein.

\begin{corollary}
A hypersurface $(M^m, g)$ immersed in an Einstein manifold $(N^{m+1}, h)$ is $(p,q)$-harmonic if and only if its mean curvature function $f$ satisfies the following system of partial differential equations
\[
\left\{
\begin{array}{l}
\displaystyle
-(q-1)f\Delta^M(f) -(q-1)(q-2)|\operatorname{grad}^M f|^2
+ f^{2}|A|^2- \frac{S f^{2}}{m+1}\\[1.5ex]
\displaystyle
 + m(p - 2)f^{4} = 0,\\[2ex]
\displaystyle
2(q-1)A(\operatorname{grad}^M f)  + f \big[ m
+(p-2)q \big] \operatorname{grad}^M f = 0,
\end{array}
\right.
\]
where $S$ is the scalar curvature of the ambient space.
\end{corollary}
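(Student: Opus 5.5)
The plan is to specialize Theorem \ref{th2} by using the Einstein condition on $(N^{m+1},h)$. Since $N$ is Einstein, $\operatorname{Ricci}^N = \lambda\, \mathrm{Id}$ as a $(1,1)$-tensor for some function $\lambda$. Because $m+1\geq 3$ in the relevant cases, Schur's lemma makes $\lambda$ constant. Taking the trace gives $S=(m+1)\lambda$, so $\lambda = S/(m+1)$.

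From this, two facts follow at once.
\begin{itemize}
\item $\mathrm{Ric}^N(\eta,\eta)=h(\operatorname{Ricci}^N\eta,\eta)=\lambda\, h(\eta,\eta)=\frac{S}{m+1}$, since $\eta$ is a unit vector.
\item $\operatorname{Ricci}^N\eta=\lambda\eta$ is normal to $M$, so its tangential part vanishes: $(\operatorname{Ricci}^N\eta)^{\top}=0$.
\end{itemize}
Note that only the pointwise proportionality is used here; the constancy of $\lambda$ is not actually needed for the substitution.

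Next, substitute these two facts into the system of Theorem \ref{th2}.
\begin{itemize}
\item In the first equation, the term $-f^2\,\mathrm{Ric}^N(\eta,\eta)$ becomes $-\frac{S f^2}{m+1}$. The remaining terms are unchanged, giving the first displayed equation.
\item In the second equation, the term $-2f(\operatorname{Ricci}^N\eta)^{\top}$ drops out, leaving $2(q-1)A(\operatorname{grad}^M f)+f[m+(p-2)q]\operatorname{grad}^M f=0$.
\end{itemize}

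One bookkeeping point needs care. Theorem \ref{th2} characterizes \emph{proper} $(p,q)$-harmonic hypersurfaces, i.e.\ those with $f\neq 0$, whereas the corollary speaks of $(p,q)$-harmonicity.

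\begin{itemize}
\item If $f\equiv 0$, then $\tau_p(\mathbf{i})=0$, so $\tau_{p,q}(\mathbf{i})=0$ trivially, and both equations hold trivially.
\item If $f\neq 0$, the equivalence is exactly the one in Theorem \ref{th2}.
\end{itemize}

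A subtler point is the case where $f$ vanishes somewhere but not identically. The system in Theorem \ref{th2} was obtained by dividing the normal component of $\tau_{p,q}(\mathbf{i})$ by $m^{\frac{pq}{2}-1}f^{q-3}$, and the tangential component by $m^{\frac{pq}{2}-1}f^{q-2}$. So I would read the corollary on the open set $\{f\neq0\}$, and combine it with the trivial case, in the same spirit as the paper's use of Theorem \ref{th2}.

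The main obstacle is therefore not computational but this interpretive step: making sure the division by powers of $f$ is legitimate. I would handle it by stating that the equivalence holds wherever $f\neq0$, and that it holds trivially when $f\equiv0$.
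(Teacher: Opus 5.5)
Your proposal is correct and follows the paper's proof. You use the Einstein condition to get $\mathrm{Ric}^N(\eta,\eta)=\lambda=S/(m+1)$ and $(\operatorname{Ricci}^N\eta)^{\top}=0$, then substitute into Theorem~\ref{th2}. Your extra remarks go beyond the paper, which silently inherits Theorem~\ref{th2}'s standing assumption $f\neq 0$: the trivial case $f\equiv 0$, the division of the normal and tangential components by powers of $f$, and the fact that only pointwise proportionality of the Ricci tensor is needed.
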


\begin{proof}
It is well known that if $(N^{m+1}, h)$ is an Einstein manifold, then its Ricci tensor satisfies
$\mathrm{Ric}^N(X,Y) = \lambda\, h(X,Y)$
for all $X, Y \in \Gamma(TN)$, where $\lambda$ is a constant. Therefore,
\[
\begin{aligned}
	S &= \mathrm{trace}_{h} \mathrm{Ric}^N \\
	&= \sum_{i=1}^m \mathrm{Ric}^N(e_i, e_i) + \mathrm{Ric}^N(\eta, \eta) \\
	&= \lambda(m + 1),
\end{aligned}
\]
where $\{e_i\}_{i=1}^m$ is a local orthonormal frame on $(M^m, g)$. Since $\mathrm{Ric}^N(\eta,\eta) = \lambda$, it follows that
\[
\mathrm{Ric}^N(\eta,\eta) = \frac{S}{m+1}.
\]
On the other hand, we compute
\[
\begin{aligned}
(\mathrm{Ricci}^N \eta)^\top
&= \sum_{i=1}^m h( \mathrm{Ricci}^N \eta, e_i ) e_i \\
&= \sum_{i=1}^m \mathrm{Ric}^N(\eta, e_i)\, e_i \\
&= \sum_{i=1}^m \lambda h( \eta, e_i ) e_i \\
&= 0.
\end{aligned}
\]
The result then follows from Theorem \ref{th2}.
\end{proof}

As a further consequence, in the case of totally umbilical hypersurfaces, the $(p,q)$-harmonic condition reduces to minimality under a curvature assumption on the ambient space.

\begin{corollary}
A totally umbilical hypersurface $(M^m, g)$ immersed in an Einstein manifold $(N^{m+1}, h)$ with non-positive scalar curvature is $(p,q)$-harmonic if and only if it is minimal.
\end{corollary}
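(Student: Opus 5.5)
The approach is to specialize the Einstein-manifold characterization (the preceding Corollary) to the umbilical case and show that the resulting system has no solution with $f\neq 0$ when $S\le 0$.

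The converse direction is immediate. If $M$ is minimal, then $f\equiv 0$, so $\tau_p(\mathbf{i})=m^{\frac p2}f\eta=0$. Every term of $\tau_{p,q}(\mathbf{i})$ in Theorem \ref{th01} carries the factor $|\tau_p(\mathbf{i})|^{q-2}\tau_p(\mathbf{i})$ or its derivatives, so $\tau_{p,q}(\mathbf{i})=0$. For $1<q<2$ this uses the natural convention that $|\tau_p|^{q-2}\tau_p=0$ where $\tau_p=0$.

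For the main direction, assume $M$ is $(p,q)$-harmonic and suppose $f(x_0)\neq 0$ at some point. Let $U$ be the connected component containing $x_0$ of the open set $\{f\neq 0\}$. On $U$, total umbilicity together with $\operatorname{trace}A=mf$ gives $A=f\,\mathrm{Id}$, hence $|A|^2=mf^2$ and $A(\operatorname{grad}^M f)=f\operatorname{grad}^M f$. Substituting into the second equation of the Einstein corollary gives
\[
f\big[2(q-1)+m+(p-2)q\big]\operatorname{grad}^M f=f\,(m+pq-2)\operatorname{grad}^M f=0 .
\]
Since $p,q>1$ we have $pq>1$, and with $m\ge 1$ this gives $m+pq-2>0$. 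Because $f\neq 0$ on $U$, we get $\operatorname{grad}^M f=0$ there, so $f$ is a nonzero constant on $U$. The first equation then loses its derivative terms and becomes
\[
mf^4-\frac{S}{m+1}f^2+m(p-2)f^4=m(p-1)f^4-\frac{S}{m+1}f^2=0 .
\]
Here $S$ is constant because $N$ is Einstein.

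Both summands are nonnegative, since $p>1$ and $S\le 0$. Their sum vanishes only if $m(p-1)f^4=0$, which forces $f=0$ on $U$, a contradiction. Hence $f\equiv 0$ and $M$ is minimal.

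The only delicate point is this localization. The system in the Einstein corollary was derived from Theorem \ref{th2} after dividing out powers of $f$ such as $f^{q-3}$, so it is valid only where $f\neq 0$. This is why the argument is run on the open set $\{f\neq0\}$ rather than globally, and why constancy of $f$ is asserted only on each connected component $U$. The contradiction is reached on $U$ itself, so no continuity argument across the boundary of $U$ is needed.
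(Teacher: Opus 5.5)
Your proof is correct and follows the same route as the paper. You substitute $A=f\,\mathrm{Id}$ into the Einstein system, use the second equation (whose coefficient $m+pq-2$ is positive) to make $f$ locally constant, and then use the sign of $m(p-1)f^4-\frac{S}{m+1}f^2$ to force $f=0$ when $S\le 0$. Your localization to the open set $\{f\neq 0\}$ and your explicit check of the converse make rigorous what the paper's pointwise ``either $f=0$ or $f=\pm\sqrt{\cdots}$'' argument glosses over.
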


\begin{proof}
It is well known that for a totally umbilical hypersurface $(M^m, g)$ in an Einstein manifold $(N^{m+1}, h)$, we have the shape operator
$A = f\mathrm{Id}$, where $\mathrm{Id}$ is the identity map. The $(p,q)$-harmonic hypersurface equations becomes
\[
\left\{
\begin{array}{l}
\displaystyle
-(q-1)f\Delta^M(f) -(q-1)(q-2)|\operatorname{grad}^M f|^2
+m(p-1) f^{4}- \frac{S f^{2}}{m+1}=0,\\[2ex]
\displaystyle
f\left[ m+(p-2)q+2(q-1) \right]\operatorname{grad}^M f = 0.
\end{array}
\right.
\]	
Solving this system, we obtain either $f = 0$, or
\[
f = \pm \sqrt{\frac{S}{m(m+1)(p-1)}},
\]
In the latter case, $f$ is constant, which occurs only when $S \geq 0$. This completes the proof.
\end{proof}

We next provide an example demonstrating the existence of totally umbilical proper $(p,q)$-harmonic hypersurfaces in Einstein manifolds with positive scalar curvature. In particular, the standard Euclidean sphere offers a natural setting for explicitly constructing such hypersurfaces. The example below illustrates that certain hyperspheres in $\mathbb{S}^{m+1}$ possess constant mean curvature and fulfill the $(p,q)$-harmonic condition.

\begin{example}
	 We consider the following hypersurface
	$$
	\mathbb{S}^m(a)=\left\{\left(x^1, \cdots, x^m, x^{m+1}, b\right) \in \mathbb{R}^{m+2},\, \sum_{i=1}^{m+1}\left(x^i\right)^2=a^2\right\} \subset \mathbb{S}^{m+1}
	$$
	where $a^2+b^2=1$ with $a,b\neq0$. A straightforward calculation shows that
	$$
	\eta=\frac{1}{r}\left(x^1, \cdots, x^{m+1},-\frac{a^2}{b}\right),
	$$
	with $r^2=\frac{a^2}{b^2}(r>0)$, defines a unit normal vector field along $\mathbb{S}^m(a)$ in $\mathbb{S}^{m+1}$.
	Let $X \in \Gamma\left(T \mathbb{S}^m(a)\right)$, we find that
	$$
	\nabla_X^{\mathbb{S}^{m+1}} \eta=\frac{1}{r} \nabla_X^{\mathbb{R}^{m+2}}\left(x^1, \cdots, x^{m+1},-\frac{a^2}{b}\right)=\frac{1}{r} X .
	$$
Hence, we have $A=-\frac{1}{r}\, \mathrm{Id}$, which gives $f=-\frac{1}{r}$, so that $\mathbb{S}^m(a)$ has constant mean curvature in $\mathbb{S}^{m+1}$. Since $|A|^2 = \frac{m}{r^2}$, Proposition \ref{pro1} implies that $\mathbb{S}^m(a)$ is a proper $(p,q)$-harmonic hypersurface in $\mathbb{S}^{m+1}$ if and only if
\[
\left(\frac{b^2}{a^2}-1\right)+(p-2)\frac{b^2}{a^2}=0.
\]
which is equivalent to $p =1/b^2$.
\end{example}

We now give an example where the mean curvature function is non-constant.

\begin{example}
Consider a surface of revolution $M$ in $\mathbb{R}^3$ given by
\[
X(u,v) = \big(r\,u\cos v,\, r\,u\sin v,\, u\big),
\]
where $r>0$ is a constant, for $u>0$ and $v\in(0,2\pi)$. Note that the induced Riemannian metric on $M$ is
$g=(1+r^2)\,du^2+r^2\,u^2\,dv^2$. A direct computation shows that the mean curvature function $f=f(u)$ is given by
\[
f(u) = \frac{1}{2\,r\sqrt{1 + r^2}}\,\frac{1}{u},
\]
where the unit normal vector field along $M$ in $\mathbb{R}^{3}$ is defined by
$$\eta=-\frac{\cos v}{\sqrt{1 + r^2}}\,\frac{\partial}{\partial x}
       -\frac{\sin v}{\sqrt{1 + r^2}}\,\frac{\partial}{\partial y}
       +\frac{r}{\sqrt{1 + r^2}}\,\frac{\partial}{\partial z}.$$
A straightforward computation yields
\begin{eqnarray*}
 \Delta^M(f)  &=& \frac{1}{2\,r\,(1 + r^2)^{\frac{3}{2}}}\,\frac{1}{u^3},  \\
 \operatorname{grad}^M f  &=& -\frac{1}{2\,r\,(1 + r^2)^{\frac{3}{2}}}\,\frac{1}{u^2}\,\frac{\partial}{\partial u},\\
 |\operatorname{grad}^M f|^2  &=& \frac{1}{4\,r^2\,(1 + r^2)^{2}}\,\frac{1}{u^4},\\
 |A|^2 &=& \frac{1}{r^2\,(1 + r^2)}\,\frac{1}{u^2},\\
 A(\operatorname{grad}^M f) &=& 0.
\end{eqnarray*}
Substituting these expressions into the system given in Proposition \ref{pro1}, We obtain the system of algebraic equations
$-2\,{r}^{2}{q}^{2}+4\,{r}^{2}\,q-2\,{r}^{2}+p=0$ and $pq-2q+2=0$. We can solve for parameters $p$ and $q$, we get the following
$$p=2\left(1-\frac{1}{q}\right),\quad r=\frac{1}{\sqrt{q\left(q-1\right)}}.$$
Thus, for this choice the surface $M$ is $(p,q)$-harmonic with a non-constant mean curvature function. This gives an explicit example of a proper $(p,q)$-harmonic surface in $\mathbb{R}^3$.
\end{example}


\subsection{$(p,q)$-harmonic curves in $3$-dimensional space form}
Let $(N^3(c),h)$ be a three-dimensional Riemannian manifold of constant curvature $c$, and let
$\gamma: I \longrightarrow (N^3(c),h)$ be a curve parametrized by arc length, where $I \subseteq \mathbb{R}$ is an open interval. Consider an orthonormal frame field $\{T, N, B\}$ along $\gamma$, with $T = \frac{d\gamma}{dt}$ the unit tangent vector, $N$ the unit normal vector in the direction of $\nabla_T T$, and $B$ chosen so that $\{T, N, B\}$ forms a positively oriented basis. Denote by $\nabla$ the Levi-Civita connection of $(N^3(c),h)$. Then, the Frenet equations along $\gamma$ are given by
\[
\left\{
\begin{aligned}
\nabla_T T &= k N,\\
\nabla_T N &= -k T + \tau B,\\
\nabla_T B &= -\tau N,
\end{aligned}
\right.
\]
where $k$ and $\tau$ are the geodesic curvature and geodesic torsion of $\gamma$, respectively.
Recalling that the tension field of $\gamma$ is given by $\tau(\gamma) = \nabla_T T = k N$.\\
The following theorem provides a characterization of proper $(p,q)$-biharmonic curves in space forms.
\begin{theorem}\label{th4}
A curve $\gamma: I \longrightarrow (N^3(c), h)$ is proper $(p,q)$-harmonic if and only if its geodesic curvature $k$ and torsion $\tau$ are constant with $k>0$, and the parameter $p$ satisfies $\displaystyle p=\frac{c-\tau^{2}}{k^{2}}+1$.
\end{theorem}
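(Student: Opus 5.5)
The plan is to apply the formula for $\tau_{p,q}$ from Theorem \ref{th01} directly to $\varphi=\gamma$ with $m=1$, expand everything in the Frenet frame $\{T,N,B\}$, and read off the three components. Since $\gamma$ is parametrized by arc length, $|d\gamma|=1$, so every power $|d\gamma|^{p-2}$, $|d\gamma|^{p-4}$ equals $1$. Moreover $\tau_p(\gamma)=\tau(\gamma)=kN$.

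Properness means $\tau_p(\gamma)\neq0$, i.e.\ $k\neq 0$. Since the Frenet frame is only defined where $k>0$, we work with $k>0$; then $|\tau_p(\gamma)|^{q-2}\tau_p(\gamma)=k^{q-1}N$ is smooth. Set $V=k^{q-1}N$. Theorem \ref{th01} then reads
\[
\tau_{p,q}(\gamma)=-k^{q-1}R^N(N,T)T-\nabla_T\nabla_T V-(p-2)\nabla_T\big(h(\nabla_T V,T)\,T\big).
\]

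The next step is the routine computation of each term.

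\textbf{Curvature term.} The space form identity gives $R^N(N,T)T=cN$.

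\textbf{First derivative.} By the Frenet equations,
\[
\nabla_T V=-k^qT+(k^{q-1})'N+k^{q-1}\tau B,
\]
so in particular $h(\nabla_T V,T)=-k^q$.

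\textbf{Second derivative.} Differentiating once more with the Frenet equations,
\[
\nabla_T\nabla_TV=-\big[(k^q)'+k(k^{q-1})'\big]T+\big[(k^{q-1})''-k^{q+1}-k^{q-1}\tau^2\big]N+\big[\tau(k^{q-1})'+(k^{q-1}\tau)'\big]B .
\]

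\textbf{Third term.} This becomes $(p-2)\big[(k^q)'T+k^{q+1}N\big]$.

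Collecting the components, $\tau_{p,q}(\gamma)=0$ is equivalent to the system
\[
\left\{
\begin{array}{l}
k^{q-1}k'\,\big[q+(q-1)+(p-2)q\big]=0,\\[1ex]
k^{q-1}\big[(p-1)k^2+\tau^2-c\big]-(k^{q-1})''=0,\\[1ex]
\tau(k^{q-1})'+(k^{q-1}\tau)'=0 .
\end{array}
\right.
\]

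The $T$-equation is the key step. Its coefficient is $pq+q-1>0$ because $p,q>1$, and $k>0$, so it forces $k'=0$. With $k$ constant, the $B$-equation reduces to $k^{q-1}\tau'=0$, hence $\tau$ is constant. The $N$-equation then becomes $(p-1)k^2+\tau^2-c=0$, which is exactly $p=\frac{c-\tau^2}{k^2}+1$.

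Conversely, if $k>0$ and $\tau$ are constant and $p$ satisfies this relation, all three components vanish, so $\gamma$ is proper $(p,q)$-harmonic.

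The main obstacle is purely bookkeeping. One must get the sign convention of $R^N$ consistent with the paper's identity $R^N(X,Y)Z=c[h(Y,Z)X-h(X,Z)Y]$, so that the curvature term contributes $-ck^{q-1}N$. One must also keep track of the several contributions to the $T$-component, and check that they combine with a strictly positive coefficient, so that $k'=0$ follows without any extra case analysis.
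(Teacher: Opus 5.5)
Your proposal is correct and follows essentially the same route as the paper. You apply the formula of Theorem~\ref{th01} with $|d\gamma|=1$ and $\tau_p(\gamma)=kN$, expand in the Frenet frame, and read off the $T$-, $N$- and $B$-components; these agree with the paper's system, up to an overall sign in the $N$-equation. One arithmetic slip: the $T$-coefficient $q+(q-1)+(p-2)q$ simplifies to $pq-1$, not $pq+q-1$. It is still positive for $p,q>1$, so $k'=0$ follows exactly as in the paper.
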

\begin{proof}
From formula (\ref{eq1.1B}), it follows that
\begin{equation}\label{th4-eq1}
\tau_{p}(\gamma)=| d\gamma|^{p-2}\tau(\gamma) + d\gamma(\operatorname{grad}^I| d\gamma|^{p-2}).
\end{equation}
Note that, the Hilbert-Schmidt norm of $d\gamma$ is $|d\gamma|^{2}=h(T,T)=1$.
Thus, $\tau_{p}(\gamma)=kN$. We now proceed to compute the $(p,q)$-tension field of $\gamma$, which yields
\begin{eqnarray}\label{th4-eq2}
\tau_{p,q}(\gamma)
&=&\nonumber-|\tau_{p}(\gamma)|^{q-2}\operatorname{trace}_gR\big(\tau_{p}(\gamma),d\gamma\big)d\gamma\\
&&\nonumber- \operatorname{trace}_g\nabla^{\gamma}\nabla^{\gamma}|\tau_{p}(\gamma)|^{q-2}\tau_{p}(\gamma)\\
&&-(p-2)\operatorname{trace}_g\nabla^{\gamma}
\left\langle\nabla^{\gamma}|\tau_{p}(\gamma)|^{q-2}\tau_{p}(\gamma),d\gamma\right\rangle d\gamma,
\end{eqnarray}
where $g = dt^2$ and $R$ denotes the Riemannian curvature tensor of $(N^3(c),h)$.
Considering the first term of (\ref{th4-eq2}), we have
\begin{eqnarray}\label{th4-eq3}
\operatorname{trace}_{g}R(\tau_p(\gamma),d\gamma)d\gamma
&=&   k R(N,T)T=ckN.
\end{eqnarray}
For the second term of (\ref{th4-eq2}), we compute\\

$\displaystyle\operatorname{trace}_{g}\nabla^{\gamma}\nabla^{\gamma}|\tau_{p}(\gamma)|^{q-2}\tau_p(\gamma)$
\begin{eqnarray}\label{th4-eq4}
 \quad\quad\quad&=& \nabla^{\gamma}_{\frac{d}{dt}}\nabla^{\gamma}_{\frac{d}{dt}}k^{q-1}N \\ \nonumber
 &=&\nabla^{\gamma}_{\frac{d}{dt}}[(q-1)k^{q-2}k'N+k^{q-1}\nabla^{\gamma}_{\frac{d}{dt}}N]\\ \nonumber
 &=&\nabla^{\gamma}_{\frac{d}{dt}}[(q-1)k^{q-2}k'N+k^{q-1}\nabla_{T}N]\\ \nonumber
 &=& \nabla^{\gamma}_{\frac{d}{dt}}[(q-1)k^{q-2}k'N-k^{q}T + k^{q-1}\tau B] \\ \nonumber
 &=& (q-1)(q-2)k^{q-3}(k')^2N+(q-1)k^{q-2}k''N+(q-1)k^{q-2}k'\nabla_TN\\ \nonumber
 & & -qk^{q-1}k'T-k^{q}\nabla_TT
     +(q-1) k^{q-2}k'\tau B+ k^{q-1}\tau' B+ k^{q-1}\tau \nabla_TB.
\end{eqnarray}
Applying the Frenet equations, equation (\ref{th4-eq4}) takes the form\\

$\displaystyle\operatorname{trace}_{g}\nabla^{\gamma}\nabla^{\gamma}|\tau_{p}(\gamma)|^{q-2}\tau_p(\gamma)$
\begin{eqnarray}\label{th4-eq5}
 \quad\quad\quad&=& \nonumber
     (q-1)(q-2)k^{q-3}(k')^2N+(q-1)k^{q-2}k''N+(q-1)k^{q-2}k'(-k T + \tau B)\\
   &&  -qk^{q-1}k'T-k^{q+1} N
     +(q-1) k^{q-2}k'\tau B+ k^{q-1}\tau' B- k^{q-1}\tau^2 N.
\end{eqnarray}
We now compute the following term
\begin{eqnarray*}
   \left\langle\nabla^{\gamma}|\tau_{p}(\gamma)|^{q-2}\tau_{p}(\gamma),d\gamma\right\rangle
   &=&h(\nabla^{\gamma}_{\frac{d}{dt}}k^{q-1}N,d\gamma(\frac{d}{dt}))\\
   &=& h((q-1)k^{q-2}k'N-k^{q}T+k^{q-1}\tau B,T) \\
   &=& -k^{q}.
\end{eqnarray*}
Hence, for the third term in (\ref{th4-eq2}), we obtain
\begin{eqnarray}\label{th4-eq6}
\operatorname{trace}_g\nabla^{\gamma}
\left\langle\nabla^{\gamma}|\tau_{p}(\gamma)|^{q-2}\tau_{p}(\gamma),d\gamma\right\rangle d\gamma
   &=&\nonumber -\nabla^{\gamma}_{\frac{d}{dt}} k^q d\gamma(\frac{d}{dt}) \\
   &=&\nonumber -(qk^{q-1}k' T+k^q\nabla_TT) \\
   &=& -qk^{q-1}k'T-k^{q+1}N.
\end{eqnarray}
Substituting equations (\ref{th4-eq3}), (\ref{th4-eq5}), and (\ref{th4-eq6}) into (\ref{th4-eq2}), we conclude that
$\gamma$ is $(p,q)$-biharmonic if and only if the following system holds
\begin{equation}\label{SYS}
  \left\{
    \begin{array}{ll}
       & (pq-1)k^{q-1}k'=0,  \\ [2ex]
       & ck^{q-1}+(q-1)(q-2)k^{q-3}(k')^2+(q-1)k^{q-2}k''\\ [1.5ex]
       & -k^{q+1}-k^{q-1}\tau^2-(p-2)k^{q+1}=0,\\ [2ex]
       & (q-1)k^{q-2}k'\tau +(q-1) k^{q-2}k'\tau + k^{q-1}\tau' =0.
    \end{array}
  \right.
\end{equation}
Since $pq>1$ and $k \neq 0$, the first equation of (\ref{SYS}) implies that $k$ is constant on $I$.
Moreover, the third equation of (\ref{SYS}) shows that $\tau$ is also constant on $I$.
Finally, the conclusion of Theorem~\ref{th4} follows from the second equation of (\ref{SYS}).
\end{proof}

In the particular cases where $N^3(c)$ is the Euclidean $3$-space $(c=0)$, the hyperbolic $3$-space $(c=-1)$, or the standard $3$-sphere $(c=1)$, Theorem \ref{th4} yields the following results.

\begin{corollary}
Then there exist no  $(p,q)$-harmonic curves with constant curvature $k\neq0$ and torsion $\tau\neq0$ in $\mathbb{R}^3$ or $\mathbb{H}^3$.
\end{corollary}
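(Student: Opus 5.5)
The plan is to deduce this directly from Theorem \ref{th4}. Suppose $\gamma$ is a $(p,q)$-harmonic curve in $N^3(c)$ with constant curvature $k\neq 0$ and constant torsion $\tau\neq0$. We may take $k>0$. Since $k\neq0$, we have $\tau_p(\gamma)=kN\neq0$, so $\gamma$ is not $p$-harmonic and is therefore proper. Theorem \ref{th4} then applies and forces
\[
p=\frac{c-\tau^{2}}{k^{2}}+1 .
\]
The rest of the argument checks this relation against the standing assumption $p>1$ made in the definition of the $(p,q)$-energy functional.

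The key step is the observation that $p>1$ is equivalent to $c-\tau^2>0$, since $k^2>0$. For $\mathbb{R}^3$ we have $c=0$, so $c-\tau^2=-\tau^2<0$ whenever $\tau\neq0$. For $\mathbb{H}^3$ we have $c=-1$, so $c-\tau^2=-1-\tau^2<0$; here the assumption $\tau\neq0$ is not even needed. In both cases $p<1$, contradicting $p>1$, so no such curve exists.

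An equivalent way to see this, and a useful cross-check, is to go back to the second equation of system \eqref{SYS}. With $k'=0$ and $\tau'=0$ it reads $k^{q-1}\big(c-\tau^2-(p-1)k^2\big)=0$, whose left-hand side is strictly negative when $c\le0$, $\tau\neq0$ and $p>1$.

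No serious obstacle is expected. The only point needing care is to justify the properness hypothesis in Theorem \ref{th4}, which follows from $k\neq0$ as noted above, and to state explicitly that the contradiction comes from $p>1$. It is also worth remarking that for $\mathbb{S}^3$ ($c=1$) the same formula permits solutions exactly when $\tau^2<1$, which explains why the corollary is restricted to $c\le 0$.
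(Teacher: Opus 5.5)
Your proposal is correct and matches the paper's argument, which it leaves implicit: specialize Theorem \ref{th4} to $c=0$ and $c=-1$, where $p=\frac{c-\tau^{2}}{k^{2}}+1<1$ contradicts the standing assumption $p>1$, with properness supplied by $\tau_p(\gamma)=kN\neq0$. As a minor aside, the hypothesis $\tau\neq0$ is also superfluous in $\mathbb{R}^3$, since $\tau=0$ would force $p=1$, which is likewise excluded.
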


\begin{corollary}\label{co2}
Let $\gamma: I \subset \mathbb{R} \longrightarrow (\mathbb{S}^3, h)$ be a curve with constant curvature $k$ and torsion $\tau$.
Then $\gamma$ is a proper $(p,q)$-harmonic curve if and only if $k>0$ and $\tau^2 < 1$, where
\[
p = \frac{1 - \tau^2}{k^2} + 1.
\]
\end{corollary}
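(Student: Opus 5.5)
The plan is to specialize Theorem~\ref{th4} to the unit sphere, i.e.\ to take $c=1$ there, and then read off what the constraint on $p$ forces. Since $\gamma$ is assumed to have constant curvature $k$ and torsion $\tau$, the constancy requirements in Theorem~\ref{th4} hold automatically. Hence $\gamma$ is proper $(p,q)$-harmonic in $\mathbb{S}^3$ if and only if $k>0$ (so that $\gamma$ is not a geodesic, i.e.\ not $p$-harmonic, because $\tau_p(\gamma)=kN$) and
\[
p=\frac{1-\tau^{2}}{k^{2}}+1 .
\]
To double-check, I would also substitute $k'=\tau'=0$ into the system \eqref{SYS}. The first and third equations vanish identically. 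After dividing by $k^{q-1}$, the second reduces to $c-k^2-\tau^2-(p-2)k^2=0$, which with $c=1$ gives exactly this formula.

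Next I derive the inequality $\tau^2<1$ from the standing assumption $p>1$ in the definition of the $(p,q)$-energy. With $k>0$, the condition $p>1$ is equivalent to $(1-\tau^2)/k^2>0$, that is, $\tau^2<1$. For the converse, given $k>0$ and $\tau^2<1$, the displayed value of $p$ is strictly greater than $1$. It is therefore an admissible exponent, and Theorem~\ref{th4} applies with any $q>1$ to show that $\gamma$ is proper $(p,q)$-harmonic.

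There is no real obstacle here. The only point needing care is to state explicitly that the admissibility condition $p>1$ is what converts the formula for $p$ into the strict inequality $\tau^2<1$, and that $k>0$ is exactly the properness requirement.
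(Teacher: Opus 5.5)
Your proposal is correct and follows the paper's argument: set $c=1$ in Theorem~\ref{th4}, note that constancy of $k$ and $\tau$ is given and that properness is exactly $k>0$, and use the standing requirement $p>1$ to turn $p=\frac{1-\tau^{2}}{k^{2}}+1$ into $\tau^{2}<1$ (and conversely). Your cross-check via \eqref{SYS} with $k'=\tau'=0$, reducing to $1-\tau^{2}=(p-1)k^{2}$, is consistent with this.
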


A helix in $\mathbb{S}^3$ is a curve with constant geodesic curvature and torsion. The following example provides an explicit parametrization of such a curve and determines the conditions under which it is proper $(p,q)$-harmonic.

\begin{example}
An arbitrary helix in $\mathbb{S}^3$ can be parametrized by
\[
\gamma(t) = \big(\cos(\alpha)\cos(at), \cos(\alpha)\sin(at), \sin(\alpha)\cos(bt), \sin(\alpha)\sin(bt)\big),
\]
where $\alpha \in (0,\pi/2)$ and $a,b$ are positive real numbers. Assume that
\[
a^2\cos^2(\alpha) + b^2\sin^2(\alpha) = 1,
\]
which guarantees that $|\gamma'(t)| = 1$. Choosing $a>b$, the geodesic curvature $k$ and torsion $\tau$ of $\gamma$ are given by
\[
k = \sqrt{(a^2 - 1)(1 - b^2)}, \qquad \tau = ab
\]
(see \cite{Branding}). According to Corollary \ref{co2}, and under the assumption that $p,q>1$, the curve $\gamma$ is proper $(p,q)$-harmonic if and only if
\[
a,b \neq 1,\quad(ab)^2<1,\quad
p = \frac{a^2 + b^2 - 2a^2 b^2}{(a^2 - 1)(1 - b^2)}.
\]
\end{example}


\subsection*{Author contributions} The authors have reviewed the manuscript.

\subsection*{Data availability Statement}  Not applicable.

\subsection*{Declarations Conflicts of Interest} The authors declare no conflict of interest.


\begin{thebibliography}{99}



\bibitem{BG} Baird, P., Gudmundsson, S.: {\it $p$-Harmonic maps and minimal submanifolds}. Math. Ann. \textbf{294}, 611--624 (1992).

\bibitem{baird} Baird, P., Wood, J. C.: {\it Harmonic morphisms between Riemannian manifolds}. Clarendon Press, Oxford (2003).

\bibitem{BI} Bojarski, B., Iwaniec, T.: {\it $p$-Harmonic equation and quasiregular mappings}. Banach Center Publ. \textbf{19}(1), 25--38 (1987).

\bibitem{Branding} Branding, V.: {\it On polyharmonic helices in space forms}. Arch. Math. \textbf{120}, 213--225 (2023).


\bibitem{CL2} Cao, X., Luo, Y.: {\it On $p$-biharmonic submanifolds in nonpositively curved manifolds}. Kodai Math. J. \textbf{39}, 567--578 (2016).

\bibitem{ES} Eells, J., Sampson, J. H.: {\it Harmonic mappings of Riemannian manifolds}. Amer. J. Math. \textbf{86}, 109--160 (1964).

\bibitem{ali} Fardoun, A.: {\it On equivariant $p$-harmonic maps}. Ann. Inst. H. Poincar\'{e} \textbf{15}, 25--72 (1998).


\bibitem{Jiang} Jiang, G. Y.: {\it $2$-harmonic maps and their first and second variational formulas}. Chinese Ann. Math. Ser. A \textbf{7}(4), 389--402 (1986).

\bibitem{LC} Latti, F., Mohammed Cherif, A.: {\it On the generalized of $p$-biharmonic and bi-$p$-harmonic maps}. arXiv:2603.06133.



\bibitem{cherif1} Mohammed Cherif, A.: {\it Liouville type theorems for generalized $p$-harmonic maps}. Arab. J. Math. \textbf{13}, 255--262 (2024).

\bibitem{MM2} Mohammed Cherif, A., Mouffoki, K.: {\it $p$-Biharmonic hypersurfaces in Einstein space and conformally flat space}. Bull. Korean Math. Soc. \textbf{60}, 705--715 (2023).

\bibitem{cherif2} Mohammed Cherif, A.: {\it On the $p$-harmonic and $p$-biharmonic maps}. J. Geom. \textbf{109}, 41 (2018).


\bibitem{NU} Nakauchi, N., Urakawa, H.: {\it Biharmonic hypersurfaces in a Riemannian manifold with non-positive Ricci curvature}. Ann. Global Anal. Geom. \textbf{40}, 125--131 (2011).

\bibitem{ON} O'Neill, B.: {\it Semi-Riemannian Geometry}. Academic Press, New York (1983).


\bibitem{OU} Ou, Y.-L.: {\it Biharmonic hypersurfaces in Riemannian manifolds}. Pacific J. Math. \textbf{248}(1), 217--232 (2010).

\bibitem{YX} Xin, Y.: {\it Geometry of harmonic maps}. Fudan University (1996).

\end{thebibliography}
\end{document}